\documentclass[12pt,oneside]{article}
\linespread{1.2}%


\usepackage[all]{xy}
\usepackage{amsmath,amssymb,amsfonts,amsthm,mathrsfs,dsfont,mathtools}
\usepackage[english]{babel}
\usepackage{graphicx,stackrel}
\usepackage{multirow,rotating,paralist}
\usepackage[toc,page,header]{appendix}
\DeclareMathAlphabet{\mathpzc}{OT1}{pzc}{m}{it}
\usepackage{minitoc}
\usepackage{extarrows}
\usepackage{hyperref}
\usepackage{csquotes}

\textheight = 9.3in            
\textwidth = 6in \leftmargin=1.25in \rightmargin=1.25in
\topmargin=0.75in
\parindent=0.3in
\hoffset -1.3truecm \voffset -3truecm

 \small {\par\vskip8pt minus3pt\rm}
\newcounter{item}[section]
\newcounter{kirshr}
\newcounter{kirsha}
\newcounter{kirshb}

\setcounter{section}{-1}


\newtheorem{theorem}{Theorem}[section]


\newcommand\undersym[2]{\raisebox{-6pt}{\tiny$#2$}{\kern-5pt}\mbox{$#1$}}
\newcommand\overcirc[1]{\raisebox{10pt}{\tiny$\circ$}{\kern-7pt}\mbox{$#1$}}

\title{\Large{Cyclic Automorphisms groups of genus $10$ non-hyperelliptic curves}}
\date{}
\author{{\small Eslam E. Badr$^{1}$ and Mohammed A. Saleem$^{2}$}}


\begin{document}
\bibliographystyle{plain}
\maketitle                     
\vspace*{-.95cm}

\begin{center}
${^1}$Department of Mathematics, Faculty of Science, \\[-0.15cm]
Cairo University, Giza, Egypt \\[-0.15cm]
\vspace*{.3cm}
${^2}$Department of Mathematics, Faculty of Science,\\[-0.15cm]
Sohag University, Sohag, Egypt \\[-0.15cm]
\end{center}

\begin{center}
$
\begin{array}{ll}
\text{Emails:}&\text{eslam@sci.cu.edu.eg} \\
  &\text{abuelhassan@science.sohag.edu.eg}
\end{array}
$
\end{center}

\maketitle \bigskip
\begin{abstract}
In this paper, we list all cyclic automorphisms subgroups $H$ for which there exists a smooth projective non-hyperelliptic sextic curve $C$ with $H\preceq Aut(C)$. Furthermore, we attach to each group a defining equation of a plane sextic curve having exactly this group as cyclic automorphisms subgroup. These equations cover up to isomorphism all plane non-singular sextic curves having some non-trivial automorphism.\\\\
\textbf{MSC 2010}: 14H37, 14R05, 14R20 \\\\
\textbf{Keywords}: Automorphisms groups, GAP library, $1$-Weierstrass points, Klein\textquoteright s quartic, Hurwitz groups, Sextic curves.
\end{abstract}

\newpage
\section{Introduction}
\par At the beginning of the 19th century, a vast literature on automorphism groups of algebraic curves has been found by
Schwartz, Klein \cite{C4}, Hurwitz \cite{H4}, Wiman \cite{W1}, \cite{W2} and others, we refer for example to \cite{pa23}. However, most of the literature is quite recent.
\par By covering space theory, a finite group $G$ acts (faithfully) on a genus $g$ curve if and only if it has a genus $g$
generating system. Using this purely group-theoretic condition, Breuer \cite{C2} used the computer algebra system GAP \cite{C3}
with major computational effort and classified all groups that act on a curve of genus $\leq48$. It greatly improved on several papers dealing with small genus, by various authors.
\par In \cite {M1}, Magaard and other authors studied the locus of genus $g\geq2$ curves that admit a finite group $G$-action of given type. Furthermore, they studied the locus of genus $g$ curves with prescribed automorphism group $G$. In other words, they classified these loci for $g=3$ (including equations for the corresponding curves), and for $g\leq10$ they classified those loci corresponding to large $G$.
\par In \cite{pa20}, \cite{B2}, the present authors used automorphism groups\textquoteright\, actions to classify and investigate the geometry of $1$-Weierstrass points on certain families of genus $3$ curves, called \emph{Kuribayashi quartic curves.}
\par In \cite{B1}, we computed the $1$-gap sequences of $1$-Weierstrass points of non-hyperelliptic smooth projective curves of genus $10$. Furthermore, the geometry of such points is classified as flexes, sextactic and tentactic points.
Also, an upper bounds for their numbers are estimated.
\par The present paper is organized in the following manner. In section $1$, we survey some of known results about the $Aut(X_g)$, where $X_g$ is a compact Riemann surface of genus $g.$ In section $2,$ we establish our main results Theorems \ref{thm20} that concern with the list of cyclic automorphisms subgroups $H$ for which there exists a smooth projective non-hyperelliptic sextic curve $C$ with $H\preceq Aut(C)$. Finally, we conclude the paper with some  remarks, comments and related problems.

\section{Preliminaries} In this section we mention some well known facts about the full automorphisms group of a compact Riemann surface $X_g$. For more details, we refer, for example, to \cite{pa23}, \cite{M1}.
\par It was first proved by Schwartz that $Aut(X)$ is a finite group. Moreover, the group $Aut(X_g)$ acts on the set $WP(X_g)$
of Weierstrass points of $X_g$. This action is faithful unless $X_g$ is hyperelliptic, in which case its kernel is the group of
order $2$ containing the hyperelliptic involution of $X_g$.
\par Let $K$ be the projective quartic curve defined by \[X^3Y+Y^3Z+Z^3X=0,\]
which is now known as Klein\textquoteright s quartic. F. Klein \cite{C4} found all its automorphisms which is the simple group of order 168.
\par  Fuchsian groups are basic for the analytic theory of Riemann surfaces, they have been used heavily in the study of $Aut(X_g)$. They have been introduced by Poincar\'{e} in 1882, for more details, we refer for example to \cite{C2}, \cite{Ri}.
\par The next milestone was Hurwitz\textquoteright s seminal paper \cite{H4} in 1893, where he discovered an upper bound on $Aut(X_g)$, which is known by Riemann-Hurwitz formula. From which he derived that \[|\,Aut(X_g)\,|\leq\,84(g-1).\]
Curves which attain this bound are called Hurwitz curves and their automorphism groups Hurwitz groups. For example the
Klein\textquoteright s quartic is a Hurwitz curve.
\par Let $N(g)$ denotes the maximum of the $|Aut(X_g)|$ for a fixed $g\geq2.$ Accola \cite{Ac1} and Maclachlan \cite{Mc1}
independently show that $N(g)\geq8(g+1)$ and this bound is sharp for infinitely many $g$\textquoteright s. Moreover, $N(g)\geq8(g+3)$ if $g$ is divisible by $3$.
\par Let $m$ be the order of an automorphism of $X_g$. Hurwitz \cite{H4} showed that \[m\leq10(g-1).\] In 1895, Wiman improved this bound to be \[m\leq2(2g+1)\] and showed this is best possible. If $m$ is a prime then $m\leq2g+1$. Homma \cite{Ho} shows that this bound is achieved if and only if the curve is birationally equivalent to
\[y^{m-s}(y-1)^s=x^q\quad for \,\,\,\,1\leq s\leq m\leq g+1.\]
\par A subgroup $G$ of $Aut(X_g)$ is said to be a large automorphism group in genus $g$ if
\[|\,G\,|>4(g-1).\]
So that, the quotient of $X_g$ by $G$ is a curve of genus $0$, and the number of points of this quotient ramified in $X_g$ is $3$
or $4$ (see \cite{C2}, \cite{FK}). Singerman \cite{Si3} shows that Riemann surfaces with large cyclic, Abelian, or Hurwitz groups are symmetric (admit an involution). Kulkarni \cite{Ku} classifies Riemann surfaces admitting large cyclic automorphism groups and works out full automorphism groups of these surfaces. Matsuno \cite{Mt} investigates the Galois covering of the projective line from compact Riemann surfaces with large automorphism groups.

\par Finally, it should be mentioned that algebraic curves of genus $10$ can be embedded holomorphically and effectively on algebraic curves of degree $6$.

\section{Main theorem}
\par The idea to obtain the results is to use a cyclic subgroup $H$ of order $m$, $H\preceq Aut(C)$, in order to obtain a model equation for $C$. It should be noted that we apply the same process used by Dolgachev in \cite{C1}.
\begin{theorem}\label{thm20}
Let $\varphi$ be a non-trivial automorphism of order $m$ of a smooth projective non-hyperellptic sextic curve $C: F(X;Y;Z)=0.$ Let us choose coordinates such that the generator of the cyclic group $H =<\varphi>$ is represented by the diagonal matrix
\[\varphi:=\left(
             \begin{array}{ccc}
               1 & 0 & 0 \\
               0 & \xi_m^a & 0 \\
               0 & 0 & \xi_m^b \\
             \end{array}
           \right),
\]
where,  $\xi_m$ is a primitive $m$-th root of unity. Then $F(X;Y;Z)$ is in the following list:
\begin{center}
  \textbf{Cyclic automorphism of order $m$}\\
\end{center}
\begin{tabular}{|c|c|}
  \hline
  Type: $m, (a,b)$ & $F(X;Y;Z)$ \\\hline\hline
   $30,(5,6)$ & $X^6+Y^6+Z^5X$ \\\hline
   $25,(2,15)$& $X^6+\alpha Z^2X^4+\beta Z^3X^3+\gamma Z^4X^2+Z^5X+Y^5Z$ \\\hline
   $24,(1,19)$& $X^6+Y^5Z+YZ^5$ \\\hline
   $21,(4,5)$&  $X^5Y+Y^5Z+Z^5X$\\\hline
   $15, (10,9)$& $X^6+Y^6+Z^5X+\alpha Z^3X^3$\\\hline
   $12, (7,1)$&  $X^6+Y^5Z+YZ^5+\alpha Y^3Z^3$\\\hline
   $10, (5,2)$&  $X^6+Y^6+Z^5X+\alpha X^4Y^2+\beta X^2Y^4$\\\hline
   $8, (1,3)$&  $X^6+Y^5Z+YZ^5+\beta X^2Y^2Z^2$\\\hline
   $6, (0,1)$&  $\beta Z^6+L_{6,Z}$\\\hline
   $6, (5,1)$&  $X^6+Y^6+Z^6+\alpha X^4YZ+\beta X^2Y^2Z^2+\gamma Y^3Z^3$\\\hline
   $6, (5,2)$&  $X^6+Y^6+Z^6+\alpha X^3Y^2Z+\beta Z^3X^3+\gamma Y^2Z^4+\delta Y^4Z^2$\\\hline
   $5, (4,3)$&  $X^6+\alpha X^3YZ^2+\beta X^2Y^3Z+XY^5+Z^5X+\delta Y^2Z^4$\\\hline
   $5, (0,1)$&  $Z^5L_{1,Z}+L_{6,Z}$\\\hline
   $4, (1,3)$&  $X^6+Y^5Z+YZ^5+\alpha X^4YZ+\beta Y^3Z^3+\gamma Z^4X^2+\delta X^2Y^2Z^2+\mu X^2Y^4$\\\hline
   $3, (0,1)$&  $\beta Z^6+Z^3L_{3,Z}+L_{6,Z}$\\\hline
   $3, (1,2)$&  $X^5Y+Y^5Z+Z^5X+\alpha Z^2X^4+\beta X^2Y^4+\gamma Y^2Z^4+\delta X^3Y^2Z+\mu XY^3Z^2+\eta X^2YZ^3$\\\hline
   $2, (0,1)$&  $\beta Z^6+Z^4L_{2,Z}+Z^2L_{4,Z}+L_{6,Z}$\\\hline
  \hline
\end{tabular}\\\\
where $L_{i,B}$ denotes a generic denotes a homogenous polynomial of degree $i$ with variables different from the variable $B$.
\end{theorem}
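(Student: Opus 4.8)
The plan is to classify, for each possible order $m$ of $\varphi$ and each admissible pair $(a,b)$, the monomials $X^iY^jZ^k$ with $i+j+k=6$ that are $\varphi$-invariant, and then to normalize the resulting invariant polynomial using the residual linear changes of coordinates that commute with (or respect) the diagonal action. Concretely, a monomial $X^iY^jZ^k$ is fixed by $\varphi$ exactly when $aj+bk\equiv 0 \pmod m$; so $F$ lies in the span of such monomials, and the first task is, for each $m$, to enumerate the solutions $(a,b)$ (up to the obvious symmetries: permuting the eigenvalues $1,\xi_m^a,\xi_m^b$, replacing $\varphi$ by a power, and scaling) for which the invariant linear system is base-point free and contains a smooth, non-hyperelliptic member. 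This is essentially the method of Dolgachev referenced in the statement, applied degree by degree.

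The key steps, in order, are: (1) Reduce to diagonal form — this is given, since any finite-order automorphism of $\mathbb{P}^2$ is diagonalizable, and an automorphism of a smooth plane sextic extends to $\mathbb{P}^2$ because the canonical embedding is intrinsic (genus $10$ non-hyperelliptic $\Leftrightarrow$ plane sextic, as noted at the end of Section~1). (2) Bound $m$: since $\varphi$ acts on the $10$-dimensional space of holomorphic differentials (equivalently on degree-$3$ monomials, or via Riemann–Hurwitz on $C\to C/H$), one gets $m\le 30$, and in fact only finitely many $m$ survive the requirement that the fixed monomial locus cut out a smooth curve not meeting the coordinate triangle badly. (3) For each surviving $m$, list the normalized $(a,b)$ and write down the span of $\varphi$-fixed sextic monomials; impose smoothness (nonsingularity along the curve, in particular at the three coordinate points and edges, which forces certain "corner" monomials like $X^6$, $Y^5Z$, $Z^5X$, etc. to appear with nonzero coefficient) and non-hyperellipticity. (4) Use the stabilizer of the diagonal torus inside $\mathrm{PGL}_3$ — permutations of coordinates together with independent scalings $X\mapsto \lambda X$, $Y\mapsto\mu Y$, $Z\mapsto\nu Z$ — to scale the "corner" coefficients to $1$ and eliminate as many other parameters as the torus action allows, leaving the genuinely moduli parameters $\alpha,\beta,\gamma,\dots$; this produces the normal forms in the table. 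The $L_{i,B}$ notation simply records the cases $m=2,3,5,6$ with $a=0$, where $\varphi$ fixes the line $\{Z=0\}$ pointwise on the relevant eigenspace and $F$ decomposes by $Z$-degree into graded pieces that remain generic in the other variables.

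The main obstacle I expect is step (3)–(4): systematically guaranteeing smoothness while carrying out the normalization. Writing down invariant monomials is bookkeeping, but deciding which coefficient patterns actually admit a nonsingular sextic — and doing so uniformly over all $(m,a,b)$ — requires checking the Jacobian criterion along the coordinate triangle and its vertices, where the invariance constraint tends to kill most monomials. For instance, one must argue that if no monomial of the form $X^iY^jZ^k$ with, say, $j=0$ survives except $X^6$, then $Z^5X$ (or $Z^6$) must be present to avoid a singularity at $[0:1:0]$, and then show that the residual torus is exactly large enough to set the surviving corner coefficients to $1$ without loss of generality. Keeping track of the overlapping symmetries (coordinate permutations, $\varphi\mapsto\varphi^k$, and torus scalings) so that no case is double-counted and no normal form is over- or under-normalized is the delicate part; the hyperelliptic exclusion is comparatively easy since a smooth plane sextic is automatically non-hyperelliptic of genus $10$, so really one only needs to ensure the member is \emph{smooth}.

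Finally I would assemble the table by going through $m=30,25,24,21,15,12,10,8,6,5,4,3,2$ in turn, recording for each admissible $(a,b)$ the normalized $F$, and remark that smaller-order cases with a given $(a,b)$ are subsumed (as specializations) in the generic families already listed, so the table is exhaustive up to isomorphism.
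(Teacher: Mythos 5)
Your proposal takes essentially the same route as the paper (Dolgachev's method): enumerate the $\varphi$-invariant sextic monomials via the condition $aj+bk\equiv 0\pmod m$, use non-singularity at the coordinate vertices and edges to force the corner monomials ($X^6$, $Y^5Z$, $Z^5X$, \dots) whose invariance yields the congruences that pin down $m$ and $(a,b)$, and then normalize coefficients by coordinate permutations and diagonal scalings, treating the $a=0$ cases separately via the $L_{i,B}$ decomposition. The only differences are organizational — the paper splits into $ab=0$ versus $ab\neq0$ and then by how many of the three reference points lie on $C$, rather than running through the values of $m$ — and your heuristic bound $m\le 30$ from the action on the space of differentials is not itself justified, but it is also not needed, since the congruences coming from the forced corner monomials already determine the finite list of possible orders.
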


\begin{proof}
Let $C$ be a non-singular plane sextic curve (i.e. with degree$\geq$ in each variable) and let $\varphi$ act by $(X:Y:Z)\mapsto(X:\xi_m^aY:\xi_m^bZ).$
Now, we have two cases, either $ab=0$ or $ab\neq0$. In the following we treat each of these cases.\\ \par\textbf{Case I:} Suppose first that $ab=0$. One can assume without loss of generality that $a=0$, (otherwise with the change of variables $Y\leftrightarrow Z$ we should obtain the same results). Write:
\[F(X;Y;Z)=\lambda Z^6+Z^5L_{1,Z}+Z^4L_{2,Z}+Z^3L_{3,Z}+Z^2L_{4,Z}+ZL_{5,Z}+L_{6,Z}.\]
\par If $\lambda=0$, then $5b\equiv0(mod\,m)$, then $m=5.$ Thus, \[L_{2,Z}=L_{3,Z}=L_{4,Z}=L_{5,Z}=0,\] and we get Type $5, (0,1).$ \par If $\lambda\neq0,$ then $6b\equiv0(mod\, m)$, then $m=2,3$ or $6.$ If $m=2,$ then \[L_{1,Z}=L_{3,Z}=L_{5,Z}=0,\] and we get Type $2, (0,1).$ If $m=3,$ then \[L_{1,Z}=L_{2,Z}=L_{4,Z}=L_{5,Z}=0,\] and we get Type $3, (0,1).$ If $m=6,$ then
\[L_{1,Z}=L_{2,Z}=L_{3,Z}=L_{4,Z}=L_{5,Z}=0,\] and we get Type $6, (0,1).$\\
\par\textbf{Case II:} Suppose that $ab\neq0.$ We can suppose that $a\neq b$ and $g.c.d(a,b)=1$ (otherwise by scaling we could reduce to the first situation).
Then necessarily $m>2.$ Let \[P_1=(1;0;0),\quad P_2=(0;1;0),\quad and\,\,\,P_1=(0;0;1)\] be the reference points. Consequently, we have the following four subcases:
\begin{description}
\item[i.] All reference points lie in $C,$
\item[ii.] Two reference points lie in $C,$
\item[iii.] One reference points lie in $C,$
\item[iv.] None of the reference points lie in $C.$
\end{description}

\begin{itemize}
\item If all reference points lie in the smooth plane sextic curve $C$, then the possibilities for the defining equation are now:
\begin{eqnarray*}
C&:& X^5L_{1,X}+Y^5L_{1,Y}+Z^5L_{1,Z}+X^4L_{2,X}+Y^4L_{2,Y}+Z^4L_{2,Z}\\
&+&X^3L_{3,X}+Y^3L_{3,Y}+Z^3L_{3,Z}.
\end{eqnarray*}
It is obvious that $B_i$ cannot appear in $L_{1,B_j}$ whenever $j\neq i$, where \[B_1:=X,\, B_2:=Y\,\,\, \text{and}\,\,\, B_3:=Z.\] Hence,
by change of the variables $X,Y$ and $Z$, we can assume that
\begin{eqnarray*}
C&:& X^5Y+Y^5Z+Z^5X+X^4L_{2,X}+Y^4L_{2,Y}+Z^4L_{2,Z}\\
&+&X^3L_{3,X}+Y^3L_{3,Y}+Z^3L_{3,Z}.
\end{eqnarray*}
The first three factors implies that $a=5a+b=5b\,(mod\,m).$ Therefore, $m=3,7$ or $21.$
\par If $m=3$, then we can take a generator of $H$ such that $(a,b)=(1,2)$. By checking each monomial invariance, we obtain Type $3, (1,2).$
\par If $m=7,21$, then we can take a generator of $H$ such that $(a,b)=(1,3), (4,5)$ respectively. By checking each monomial\textquoteright s invariance, we obtain that no other monomial enters $C.$ Thus, we get Type $21, (4,5)$ in the table.
\item If two reference points lie in the smooth plane sextic curve $C$, then by re-scaling the matrix $\varphi$
and permuting the coordinates, we can assume that $(1;0;0)\notin C.$ The equation is then:
\[C: X^6+X^4L_{2,X}+X^3L_{3,X}+X^2L_{4,X}+XL_{5,X}+L_{6,X},\]
since $L_{1,X}$ is not invariant by $\varphi$ $(ab\neq0)$. Moreover, $Z^6$ and $Y^6$ are not in $L_{6,X}$
by assumption only $(1;0;0)\notin C.$\\
\par Assume first that $Y^5Z$ and $YZ^5$ are in $L_{6,X}.$ Then, $5a+b\equiv0\,(mod\,m)$ and $a+5b\equiv0\,(mod\,m)$, so that, $m|24.$
\par If $m=24$, then we can take a generator of $H$ such that $(a,b)=(1,19).$ Now, by checking each
monomial\textquoteright s invariance, we get that no other monomial enters $L_{6,X}.$ Moreover,
$L_{2,X}=L_{3,X}=L_{4,X}=L_{5,X}=0.$ Hence, we obtain Type $24, (1,19)$ in the table.
\par If $m=12$, then we can take a generator of $H$ such that $(a,b)=(7,1).$ Again, $L_{2,X}=L_{3,X}=L_{4,X}=L_{5,X}=0.$
But, the monomial $Y^3Z^3$ is invariant under $\varphi,$ consequently, we obtain type $12, (7,1).$
\par If $m=8$, then we can take a generator of $H$ such that $(a,b)=(1,3).$ Thus, $L_{2,X}=L_{3,X}=L_{5,X}=0$
and the monomial $X^2Y^2Z^2$ is invariant under $\varphi.$ Moreover no other monomial enters $C.$ Thus, we get Type $8,(1,3).$
\par If $m=4$, then we can take a generator of $H$ such that $(a,b)=(1,3).$ Thus, by checking each monomial\textquoteright s
invariance, we get that $L_{3,X}=L_{5,X}=0$ and $Y^3Z^3,\, X^2Y^4,\, Z^4X^2,\, X^2Y^2Z^2$ and $X^4YZ$ are invariant under $\varphi.$ Hence, we obtain Type $4, (1,3).$
\par Finally, the cases $m=3$ or $6$ can not happen under the condition that $a\neq b.$\\
\par Assume second that $Y^5Z\in L_{6,X}$ and $YZ^5\notin L_{6,X}.$ Then by non-singularity $Z^5$ is in $L_{5,X}$,
that is $5a+b\equiv0\, (mod\,m)$ and $5b\equiv0\, (mod\,m).$ Hence, $m=5$ or $25.$
\par If $m=25$, then we can take a generator of $H$ such that $(a,b)=(2,15).$ Thus, by checking each monomial\textquoteright s
invariance, we get that the monomials $X^4Z^2,\, X^3Y^3,\, Z^4X^2,$ and $Z^5X$ are invariant under $\varphi.$ So, we obtain Type
$25, (2,15)$ in the table. Moreover, the case $m=5$ can not happen under the assumption that $Y^5Z\in L_{6,X}$.\\
\par Up to a permutation of $Y\leftrightarrow Z$, we can assume that $Y^5Z$ and $YZ^5$ are not in $L_{6,X}.$ By non-singularity, we have that $Z^5$ and $Y^5$ should be in $L_{5,X}$, then $5b\equiv0\,(mod\,m)$ and $5a\equiv0\,(mod\,m).$
Therefore, $m=5$ and $(a,b)=(4,3)$, thus $L_{6,X}=0$ and the monomials $X^3YZ^2,\,X^2Y^3Z,\,Z^5X,\,XY^5,$ and $XY^2Z^4$ enter $C.$ So, we get Type $5, (4,3)$ in the table.
\item If one reference points lie in the smooth plane sextic curve $C$, then by normalizing the matrix $\varphi$ and
permuting the coordinates, we can assume that $(1;0;0),\,(0;1;0)\notin C.$ We can write
\[C: X^6+Y^6+X^4L_{2,X}+X^3L_{3,X}+X^2L_{4,X}+XL_{5,X}+L_{6,X},\]
such that $Z^6\notin L_{6,X}.$ Also, by non-singularity, we have that $Z^5\in L_{5,X},$ then $6a\equiv0\,(mod\,m)$ and
$5b\equiv0\,(mod\,m).$ So that, $m=10,15$ or $30.$
\par If $m=10,$ then we can take a generator of $H$ such that $(a,b)=(5,2).$ Thus, by checking each monomial\textquoteright s
invariance, we get that the monomials $X^4Z^2,\, X^3Y^3,\, Z^4X^2,$ and $Z^5X$ are invariant under $\varphi,$
moreover, $L_{3,X}=L_{6,X}=0.$ Hence, we obtain Type $10, (5,2).$
\par If $m=15,$ then we can take a generator of $H$ such that $(a,b)=(10,9).$ By checking each monomial\textquoteright s
invariance, we get that no other monomial enters $L_{5,X}.$ Moreover, $L_{2,X}=L_{4,X}=0$ and the monomial $Z^3X^3$
is invariant under $\varphi.$ Consequently, we obtain Type $15, (10,9)$ in the table.
\par If $m=30,$ then $(a,b)=(5,6).$ By checking each monomial\textquoteright s
invariance, we get that no other monomial enters $C.$ Thus, we obtain Type $30, (5,6)$ in the table.
\item If none of the reference points lie in the smooth plane sextic curve $C$, then the possibilities are
\[C: X^6+Y^6+Z^6+X^4L_{2,X}+X^3L_{3,X}+X^2L_{4,X}+XL_{5,X}+L_{6,X},\]
where $L_{1,X}$ does not appear since $ab\neq0.$ Clearly, $6a=6b=0\,(mod\,m),$ therefore $m=6.$
We can take $(a,b)=(5,1),$ and $(5,2).$ Thus, we obtain only (up to isomorphism) the two types  $6, (5,1)$ and $6, (5,2)$.\\
This completes the proof.
\end{itemize}

\end{proof}
\noindent{\fontsize{13.5}{9}\textbf{Concluding remarks.}}\\
We conclude the present paper with some remarks and comments.
\begin{itemize}
\item It should be noted that, the equation $F(X;Y;Z)$ that we attach
to some concrete type in Theorem \ref{thm20} can have another type for some special values of the
parameters. For example, Type $15; (10,9)$ with $\alpha=0$ has Type $30, (5,6)$;
another example is Type $3, (0,1)$ with $L_{3,Z}=0$ has Type $6, (0,1).$
\item The main theorem constitute a motivation to solve more general problems. One of these problems is the
determination of the non-trivial groups $G$ that appear as the  full automorphism group of a non-hyperelliptic genus $10$
curve over an algebraic closed field of characteristic zero. However, this problem will be the object of a forthcoming work.\\
Another problem is using the results done in \cite {B1} to investigate the geometry of higher order and multiple Weierstrass points of a certain families of genus $10$ curves with parameters.
 \end{itemize}

\bigskip\noindent

\end{document}